\theoremstyle{plain}
\newtheorem{thm}{Theorem}[section]
\newtheorem*{thm*}{Theorem}
\newtheorem*{cor*}{Corollary}
\newtheorem{prop}[thm]{Proposition}
\newtheorem{lem}[thm]{Lemma}
\newtheorem{cor}[thm]{Corollary}
\newtheorem*{claim*}{Claim}
\theoremstyle{definition}
\newtheorem{ex}[thm]{Example}
\newtheorem{ques}[thm]{Question}
\newtheorem{setup}[thm]{Setup}
\theoremstyle{remark}
\newtheorem{rem}[thm]{Remark}
\numberwithin{equation}{thm}
\newtheorem*{ac}{Acknowledgments}
\def\Z{\mathbb{Z}}
\def\Ext{\operatorname{Ext}}
\def\Im{\operatorname{Im}}
\def\bbZ{\mathbb{Z}}
\def\bbN{\mathbb{N}}
\def\Hom{\operatorname{Hom}}
\def\RHom{\mathrm{{\bf R}Hom}}
\def\Coker{\mathrm{Coker}}
\def\m{\mathfrak m}
\def\n{\mathfrak n}
\def\p{\mathfrak p}
\def\K{\mathrm{K}}
\newcommand{\rma}{\mathrm{a}}
\newcommand{\rmc}{\mathrm{c}}
\newcommand{\rmf}{\mathrm{f}}
\newcommand{\rmo}{\mathrm{o}}
\newcommand{\rmr}{\mathrm{r}}
\newcommand{\rmH}{\mathrm{H}}
\newcommand{\rmK}{\mathrm{K}}
\newcommand{\rmQ}{\mathrm{Q}}
\newcommand{\calF}{\mathcal{F}}
\newcommand{\calX}{\mathcal{X}}
\newcommand{\calY}{\mathcal{Y}}
\newcommand{\fkc}{\mathfrak{c}}
\newcommand{\fkM}{\mathfrak{M}}
\newcommand{\mapright}[1]{%
\smash{\mathop{%
\hbox to 1cm{\rightarrowfill}}\limits^{#1}}}
\newcommand{\mapleft}[1]{%
\smash{\mathop{%
\hbox to 1cm{\leftarrowfill}}\limits_{#1}}}
\def\AGL{\operatorname{AGL}}
\def\height{\mathrm{ht}}
\def\Spec{\operatorname{Spec}}
\def\Syz{\mathrm{Syz}}
\def\gr{\mbox{\rm gr}}
\title[How many ideals whose quotient rings are Gorenstein exist?]{How many ideals whose quotient rings are Gorenstein exist?}
\author[Naoki Endo]{Naoki Endo}
\address{School of Political Science and Economics, Meiji University, 1-9-1 Eifuku, Suginami-ku, Tokyo 168-8555, Japan}
\email{endo@meiji.ac.jp}
\urladdr{https://www.isc.meiji.ac.jp/~endo/}
\thanks{2020 {\em Mathematics Subject Classification.} 13H10, 13A15, 13A02.}
\thanks{{\em Key words and phrases.} Gorenstein ring, numerical semigroup ring, $\rma$-invariant}
\thanks{The author was partially supported by JSPS Grant-in-Aid for Young Scientists 20K14299 and JSPS Grant-in-Aid for Scientific Research (C) 23K03058.}
\begin{document}

\maketitle

\setlength{\baselineskip} {15pt}

%Gorenstein局所環上のUlrich ideals $I$に対して, 剰余類環$R/I$はGorenstein環である。%本論文では, Ulrich idealsの理論の更なる発展を目指して, 
\begin{abstract}
For an Ulrich ideal in a Gorenstein local ring, the quotient ring is again Gorenstein. Aiming to further develop the theory of Ulrich ideals, this paper investigates a naive question of how many non-principal ideals whose quotient rings are Gorenstein exist in a given Gorenstein ring. The main result provides that the number of such graded ideals in a symmetric numerical semigroup ring $R$ coincides with the conductor of the semigroup. We furthermore provide a complete list of non-principal graded ideals $I$ in $R$ whose quotient rings $R/I$ are Gorenstein. 
\end{abstract}

%{\footnotesize \tableofcontents}

%%%%%%%%%%%%%%%%%%%%%%%%%%%%%%%%%%%%%%%%%%%%%%%%%%%%%%%%%%%%%%%%%%%%%%%%%%%%%%%%%%%%%%%%%%%%%%%%%%%%%%%%%%%%%%%%%%%%%%%%%%%%%%%%%%%%%%%%%%%%%%%%

\section{Introduction}\label{sec1}

We investigate a question of how many non-principal ideals whose residue class rings are Gorenstein exist in Gorenstein rings. 
The motivation of this query comes from the desire to develop the study of {\it Ulrich} ideals. The notion of Ulrich ideals is one of the modifications of that of {\it stable} maximal ideals introduced  in 1971 by his monumental paper \cite{L} of J. Lipman. The present one was formulated by S. Goto, K. Ozeki, R. Takahashi, K.-i. Watanabe, and K.-i. Yoshida \cite{GOTWY} in 2014, where the authors developed and consolidated the basic theory of Ulrich ideals. 
For example, if $A$ is a Gorenstein local ring and that for Ulrich ideals $I$ and $J$, the equality $I=J$ holds, provided  $\Syz_A^i(A/I) \cong \Syz_A^i(A/J)$ for some $i \ge 0$, where $\Syz_A^i(-)$ denotes the {\it i}-th syzygy module in the minimal free resolution. This shows that 
all the Cohen-Macaulay local rings {\it of finite CM-representation type}, i.e., there exist only finitely many non-isomorphic indecomposable maximal Cohen-Macaulay modules, contain only finitely many Ulrich ideals (\cite[Theorem 7.8]{GOTWY}, see also \cite[Theorem 5.1]{EG2}). In particular, by using the techniques from the representation theory of maximal Cohen-Macaulay modules, they succeeded in determining all the Ulrich ideals in Gorenstein local rings of finite CM-representation type of dimension at most $2$ (\cite[Theorem 9.5]{GOTWY}, \cite[Corollary 5.8]{GOTWY2}, see also \cite[Theorem 6.1]{GIT}). 
In addition, over a Gorenstein local ring $(A, \m)$, if $I$ and $J$ are Ulrich ideals of $A$ with $\m J \subseteq  I \subsetneq  J$, then $A$ must be a hypersurface (\cite[Corollary 7.6]{GOTWY}). Thus the ubiquity of Ulrich ideals reflects the singularities of base rings. 
Subsequently, the authors of \cite{GTT2} studied the structure of the complex $\RHom_A(A/I, \, A)$ in the derived category of a Cohen-Macaulay local ring $A$ with $d= \dim A$, and proved that  $A$ is Gorenstein if and only if $A/I$ is Gorenstein and $\mu_A(I) = d+1$, if an Ulrich ideal $I$ exists (\cite[Corollary 2.6 (b)]{GOTWY}, \cite[Corollary 2.6]{GTT2}), 
where $\mu_A(-)$ denotes the number of generators. 

Motivated by this observation, in this paper we investigate the following question.

\begin{ques}\label{q1}
Let $A$ be a Gorenstien ring with $d= \dim A>0$. 
How many ideals $I$ of $A$ with $\height_AI=1$ which satisfy the ring $A/I$ is Gorenstein and $\mu_A(I) \ge 2$ exist?
\end{ques}

%%%%%%%%%%%%% オリジナル version

\if0

Let $(A, \m)$ be a Cohen-Macaulay local ring with $d=\dim A \ge 0$. An $\m$-primary ideal $I$ is called {\it Ulrich}, if the associated graded ring $\gr_I(A) = \bigoplus_{n \ge 0}I^n/I^{n+1}$ is a Cohen-Macaulay ring with $\rma(\gr_I(A)) = 1-d$ and $I/I^2$ is free as an $A/I$-module, where $\rma(\gr_I(A))$ stands for the $\rma$-invariant of $\gr_I(A)$. 
When $I$ contains a parameter ideal $Q$ of $A$ as a reduction, i.e., $I^{r+1} = QI^r$ for some $r \ge 0$, the ideal $I$ is Ulrich if and only if $I \ne Q$, $I^2 = QI$, and $I/Q$ is a free $A/I$-module (\cite[Definition 1.1, Lemma 2.3]{GOTWY}). 
%The  notion of Ulrich ideal/module dates back to the work \cite{GOTWY} in 2014, where the authors introduced the notion, generalizing that of MGMCM modules (maximally generated maximal Cohen-Macaulay modules)  (\cite{BHU}), and started the basic theory. The maximal ideal of a Cohen-Macaulay local ring with minimal multiplicity is a typical example of Ulrich ideals, and the higher syzygy modules of Ulrich ideals are Ulrich modules. In \cite{GOTWY, GOTWY2}, all the Ulrich ideals of Gorenstein local rings of finite CM-representation type and of dimension at most $2$ are determined, by means of the classification in the representation theory. 　
The notion of Ulrich ideal is one of the modifications of that of {\it stable} maximal ideal introduced  in 1971 by his monumental paper \cite{L} of J. Lipman. The present modification was formulated by S. Goto, K. Ozeki, R. Takahashi, K.-i. Watanabe, and K.-i. Yoshida \cite{GOTWY} in 2014, where the authors developed and consolidated the basic theory of Ulrich ideals. 
As an example, since $I/Q$ is free as an $A/I$-module, we have the inequality $(\mu_A(I)-d)\cdot\rmr(A/I) \le  \rmr(A)$ (\cite[Corollary 2.6 (b)]{GOTWY}), where $\mu_A(-)$ and $\rmr(-)$ denote the number of generators and the Cohen-Macaulay type, respectively. Subsequently, the authors of \cite{GTT2} studied the structure of the complex $\RHom_A(A/I, \, A)$ in the derived category of $A$, and proved that the equality $(\mu_A(I)-d)\cdot\rmr(A/I) =  \rmr(A)$ holds (\cite[Corollary 2.6]{GTT2}). Hence, $A$ is Gorenstein if and only if $A/I$ is Gorenstein and $\mu_A(I) = d+1$, if an Ulrich ideal $I$ exists.

Motivated by this observation, in this paper we investigate the following question.

\begin{ques}\label{q1}
Let $A$ be a Gorenstien ring with $d= \dim A>0$. 
How many ideals $I$ of $A$ with $\height_AI=1$ which satisfy the ring $A/I$ is Gorenstein and $\mu_A(I) \ge 2$ exist?
\end{ques}

\fi

%%%%%%%%%%%%% オリジナル version

%・UFDならheight one primeは単項イデアルになるので, 一つもない？\\
%・canonical idealsはこの条件を満たす？\\
%・$d=1$ならUlrich idealsはこの条件を満たす？
Every Ulrich ideal in a one-dimensional Gorenstein local ring satisfies the conditions stated as in Queistion \ref{q1}. Whereas, based on the preceding researches on Ulrich ideals (\cite{EG, EGIM, GIT, GTT2}),
%the past experience on analysis for Ulrich ideals (\cite{EG, EGIM, GIT, GTT2}), 
it is rather difficult to make a list of all the Ulrich ideals even for one-dimensional Cohen-Macaulay local rings, especially for numerical semigroup rings; see e.g., \cite[Theorem 3.9, Theorem 4.1]{EG}. %As an example, all the Ulrich ideals in the semigroup ring $k[[t^4, t^5, t^6]]$ over a field $k$ have the form $(t^4 - ct^5, t^6)$ with $c \in k$, where $t$ is an indeterminate over $k$ (\cite[Example 4.4]{GIT}).
In light of the result that there are only finitely many Ulrich ideals generated by monomials in numerical semigroup rings (\cite[Theorem 6.1]{GOTWY}), we start our investigation on Question \ref{q1} by going over graded ideals. Still, it remains unclear the question even if we restrict to graded ideals in numerical semigroup rings, which we will clarify in this paper. %, and the main result is stated as follows. 

%focus on graded ideals as a first step to attack Question \ref{q1}. %should be focused on graded ideals. 
%・Ulrich idealsの過去の解析の経験からもmonomial でないものを含めたUlrich idealsを全て求めることは極めて難しい。
%monomial なUlrich idealsは有限個であることを踏まえても, この問いに対する最初のステップとしては, gradedなイデアルを考えたい。
%そして1次元CM環のprototypeとして数値半群環を考えたい。
%localはより複雑だろうから, まずはgraded caseを考えたい。全部決めるのはとても難しい。単純な具体例であってもこのようなイデアルを全て書き出すのはほぼ不可能なのでは？つまり, 問いは, 具体的に書き出すのではなくて, このようなイデアルがいくつあるのか, 有限個なのか, 無限個なのかといったことになる。\\
%・local caseは2元生成Ulrich idealsでも無限個ある環の例があるから, 有限個ということは期待できない。\\
%・value semigroupを使って, local caseにできるだろうか。\\
%・$R/\m$が代数閉体なら, 体の同型がある。\\
%・テストケースとして, 1次元の数値半群環の場合を考えたい。これでも相当にハードである。\\
%数値半群の幾何的な関連, value semigroupの話などを書く。DPD constructionやワイエルシュトラス半群の話など。渡辺先生の本を参考に。\\

%

%Let $\bbN$ be the set of non-negative integers. 
To state our result, let us explain the notation. 
For a numerical semigroup $H$, the  ring  
$$
R=k[H] = k[t^h \mid h \in H] \subseteq k[t]
$$
is called the {\it numerical semigroup ring} of $H$ over a field $k$, where $t$ denotes an indeterminate over $k$. Note that $R=k[H]$ is a one-dimensional Noetherian graded integral domain; moreover the ring $R$ enjoys a beautiful relation with its corresponding semigroup $H$. A typical example is that the maximum integer $\rmf(H)$ in the set $\bbN \setminus H$ coincides with the $\rma$-invariant $\rma(R)$ of the ring $R=k[H]$ (\cite[Example (2.1.9)]{GW}). Besides, the semigroup $H$ is {\it symmetric}, i.e., the equality $\#\{n \in H \mid n< \rmc(H) \} = \#(\bbN \setminus H)$ holds, if and only if its semigroup ring $R=k[H]$ is Gorenstein, where $\bbN$ denotes the set of non-negative integers, $\#(-)$ is the cardinality of a set, and $\rmc(H) = \rmf(H) + 1$ is the conductor of $H$. See \cite[Proposition 2.21]{HK2} or \cite[Theorem]{Kunz} for the proof of this fact.

%A {\it numerical semigroup} is a non-empty subset $H$ of $\bbN$ which is closed under addition, contains the zero element, and whose complement in $\bbN$ is finite. %Every numerical semigroup $H$ admits a finite minimal system of generators, i.e., there exist positive integers $a_1, a_2, \ldots, a_{\ell} \in H~(\ell \ge 1)$ such that 
%$$
%H = \left<a_1, a_2, \ldots, a_\ell\right>=\left\{\sum_{i=1}^\ell c_ia_i ~\middle|~  c_i \in \Bbb N~\text{for~all}~1 \le i \le \ell \right\}. 
%$$ 
%For a field $k$, the ring 
%$
%k[H] = k[t^{a_1}, t^{a_2}, \ldots, t^{a_\ell}]
%$
%is called the {\it numerical semigroup ring} of $H$ over $k$, where $t$ denotes an indeterminate over $k$. 

%%%%%%%%%%%%%%%%  オリジナル version
\if0
Let $\bbN$ be the set of non-negative integers. A {\it numerical semigroup} is a non-empty subset $H$ of $\bbN$ which is closed under addition, contains the zero element, and whose complement in $\bbN$ is finite. Every numerical semigroup $H$ admits a finite minimal system of generators, i.e., there exist positive integers $a_1, a_2, \ldots, a_{\ell} \in H~(\ell \ge 1)$ such that 
$$
H = \left<a_1, a_2, \ldots, a_\ell\right>=\left\{\sum_{i=1}^\ell c_ia_i ~\middle|~  c_i \in \Bbb N~\text{for~all}~1 \le i \le \ell \right\}. 
$$ 
For a field $k$, the ring 
$
k[H] = k[t^{a_1}, t^{a_2}, \ldots, t^{a_\ell}]
$
is called the {\it numerical semigroup ring} of $H$ over $k$, where $t$ denotes an indeterminate over $k$. Then $R=k[H]$ forms a one-dimensional Noetherian graded integral domain; moreover the ring $R$ enjoys a beautiful relation with its corresponding semigroup $H$. A typical example is that the maximum integer $\rmf(H)$ in the set $\bbN \setminus H$ coincides with the $\rma$-invariant $\rma(R)$ of the ring $R=k[H]$ (\cite[Example (2.1.9)]{GW}). Besides, the semigroup $H$ is {\it symmetric}, i.e., the equality $\#\{n \in H \mid n< \rmc(H) \} = \#(\bbN \setminus H)$ holds, if and only if its semigroup ring $R=k[H]$ is Gorenstein, where $\#(-)$ denotes the cardinality of a set and $\rmc(H) = \rmf(H) + 1$ is the conductor of $H$. See \cite[Proposition 2.21]{HK2} or \cite[Theorem]{Kunz} for the proof of this fact.
\fi
%%%%%%%%%%%%%%%% 

%Numerical semigroup rings are determined by numerical semigroups which have important applications in algebraic geometry, for example, in Zariski's equi-singularity theory of plane curve singularities. Moreover, for each point in a smooth projective curve, we can provide the so-called Weierstrass semigroup. It is known that this semigroup plays an important role in the theory of algebraic curves, and it provides numerous examples of two-dimensional normal singularities via the DPD (Dolgachev, Pinkham, Demazure) construction

%数値半群環の定義をモノイド+ 補集合が有限と定める。Kunzの結果に言及。

%数値半群環のDefもするかな、

With this notation, the main result of this paper is stated as follows. 

\begin{thm}\label{main}
Suppose that $R=k[H]$ is a Gorenstein ring. Then the equality
$$
\#\left\{I \mid I ~\text{is a graded ideal of} ~R ~\text{such that}~R/I~\text{is Gorenstein and}~\mu_R(I) \ge 2\right\}  =  \rmc(H)
$$
holds.
\end{thm}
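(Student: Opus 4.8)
The plan is to reduce everything to the combinatorics of $H$. Because $R$ is graded with $\dim_k R_n\le 1$ for all $n$ (indeed $R_n=k\,t^n$ exactly when $n\in H$), every nonzero proper graded ideal $I$ is of the form $I=\bigoplus_{h\in E}k\,t^h$ for a unique semigroup ideal $E\subseteq H$ (a subset with $0\notin E$ and $E+H\subseteq E$), and is thus encoded by the finite complementary set $\Delta:=H\setminus E\ni 0$, which is \emph{summand-closed}: if $h\in\Delta$ and $h=h_1+h_2$ with $h_1,h_2\in H$, then $h_1,h_2\in\Delta$. Then $R/I=\bigoplus_{h\in\Delta}k\,t^h$ is a graded Artinian algebra with $\{0,1\}$-valued Hilbert function. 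The first step is the criterion: \emph{$R/I$ is Gorenstein if and only if, setting $a:=\max\Delta=\rma(R/I)$, one has $h\in\Delta\iff a-h\in\Delta$}. The forward direction uses the standard fact that a graded Artinian Gorenstein algebra has one-dimensional socle in a single top degree and a symmetric Hilbert function; conversely, since each graded piece is at most one-dimensional, this symmetry makes the multiplication $(R/I)_h\times(R/I)_{a-h}\to(R/I)_a=k$, $t^h\cdot t^{a-h}=t^a$, a perfect pairing, so $\Soc(R/I)=(R/I)_a$ is one-dimensional.

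The second step classifies the admissible $\Delta$. I claim that for each $a\in H$ there is \emph{exactly one} symmetric summand-closed set with top element $a$, namely
\[
\Delta_a:=\{\,h\in H\mid a-h\in H\,\}.
\]
That $\Delta_a$ qualifies is immediate from its definition. For uniqueness—the technical heart—let $\Delta$ be symmetric, summand-closed, with $\max\Delta=a$. Symmetry gives $\Delta\subseteq\Delta_a$ (if $h\in\Delta$ then $a-h\in\Delta\subseteq H$, so $h\in\Delta_a$), while summand-closedness, applied to the element $a\in\Delta$, gives the reverse inclusion: any $h\in\Delta_a$ satisfies $a=h+(a-h)$ with both summands in $H$, whence $h\in\Delta$. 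Therefore the graded ideals $I$ with $R/I$ Gorenstein are in bijection with $H$, the correspondence sending $I$ to its socle degree $a=\rma(R/I)$.

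The third step—where the hypothesis that $R$ is Gorenstein is used decisively—is to single out the principal ideals. For $b\in H_+:=H\setminus\{0\}$ the element $t^b$ is a nonzerodivisor, so $R/(t^b)$ is Gorenstein \emph{because $R$ is}, with socle degree $\rma(R/(t^b))=\rma(R)+b=\rmf(H)+b$; equivalently $\max\bigl(H\setminus(b+H)\bigr)=\rmf(H)+b$. (Without the Gorenstein hypothesis on $R$ the principal ideals would not contribute, and there would be infinitely many non-principal Gorenstein quotients.) By the uniqueness of the second step, an ideal $I$ with $R/I$ Gorenstein is principal precisely when its socle degree lies in $\rmf(H)+H_+$, so the non-principal ones (i.e. those with $\mu_R(I)\ge 2$) are in bijection with
\[
H\setminus\bigl(\rmf(H)+H_+\bigr),
\]
and the theorem reduces to the identity $\#\bigl(H\setminus(\rmf(H)+H_+)\bigr)=\rmc(H)$.

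For the final count, a direct bijection finishes the proof: an element $h\in H$ avoids $\rmf(H)+H_+$ exactly when either $h<\rmc(H)$, or $h\ge\rmc(H)$ and $h-\rmf(H)\in\bbN\setminus H$. The first alternative contributes $\#\{h\in H\mid h<\rmc(H)\}$, while the second, via $h\mapsto h-\rmf(H)$, contributes $\#(\bbN\setminus H)$; the sets $\{h\in H\mid h<\rmc(H)\}$ and $\bbN\setminus H$ are disjoint and partition $\{0,1,\dots,\rmc(H)-1\}$, so the total is exactly $\rmc(H)$. The main obstacle is the combination of the Gorenstein criterion with the uniqueness statement $\Delta=\Delta_a$, together with the recognition that the Gorenstein hypothesis on $R$ is precisely what pushes all large socle degrees into the principal range, leaving only the $\rmc(H)$ non-principal exceptions.
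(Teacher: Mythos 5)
Your argument is correct, and it reaches the count by a genuinely different route from the paper's. The paper works with canonical duals throughout: it first sets up a bijection between $\bbN\setminus H$ and the ideals $I\in\calX_R$ with $\rma(R/I)<\rma(R)$ via $m\mapsto R:(1,t^m)$ (using $R:(R:\overline{R})=\overline{R}$ for Gorenstein $R$), and then pairs the ideals with $\rma(R/I)>\rma(R)$ against those with $\rma(R/I)<\rma(R)$ through the correspondence $I\mapsto t^{\rma(R)-\rma(R/I)}I$, obtaining $\tfrac{c}{2}+\tfrac{c}{2}$. You instead classify \emph{all} graded ideals with Gorenstein quotient at once, purely combinatorially: such an ideal is determined by its gap set $\Delta$, Gorensteinness of $R/I$ is equivalent to symmetry of $\Delta$ about its top element $a$, and summand-closedness pins down $\Delta=\{h\in H\mid a-h\in H\}$, so that $I\mapsto\rma(R/I)$ is a bijection onto $H$; the Gorenstein hypothesis on $R$ then enters only to identify the principal members as exactly those with socle degree in $\rmf(H)+H_+$. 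Your route is more elementary (the only duality input is the symmetry of the Hilbert function of a graded Artinian Gorenstein algebra, which here could even be replaced by a direct socle computation using the chain argument from $t^h\notin\Soc(R/I)$ up to degree $a$), it isolates exactly where the hypothesis on $R$ is used (without it no $I$ in your list is principal and $\calX_R$ is infinite), and your final identity $\#\{h\in H\mid h<c\}+\#(\bbN\setminus H)=c$ holds for any numerical semigroup, whereas the paper's count invokes symmetry of $H$ to evaluate $\#(\bbN\setminus H)$. What the paper's approach buys in exchange is the explicit presentation of the ideals as $R:_Rt^m$ and $t^m(R:_Rt^m)$ together with the relation $\rma(R/J)=2\rma(R)-\rma(R/I)$ (Corollaries 4.1 and 4.2); your parametrization by socle degree recovers the same list in an equivalent form, as one checks on the example $H=\left<3,4\right>$.
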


Let us now explain how this paper is organized. To show Theorem \ref{main}, we need several auxiliaries which we will prepare in Section 2.  We actually provide them in a bit more general setting, not only for numerical semigroup rings. %The purpose of Section 3 is to give a proof of Theorem \ref{main} 
We shall prove Theorem \ref{main} in Section 3 starting with the case where $\rma(R/I) < \rma(R)$. In Section 4 we finally provide a complete list of non-principal graded ideals $I$ in $R$ whose quotient rings $R/I$ are Gorenstein. As an application of Theorem \ref{main}, we consider such ideals in the associated graded ring with respect to a certain filtration of ideals. Examples are explored as well.

%%%%%%%%%%%%%%%%%%%%%%%%%%%%%%%%%%%%%%%%%%%%%%%%%%%%%%%%%%%%%%%%%%%%%%%%%%%%%%%%%%%%%%%%%%%%%%%%%%%%%%%%%%%%%%%%%%%%%%%

\section{Preliminaries}

%主結果は数値半群環に限定しているけど, ここではもう少し一般の枠組みで準備をする。

Let $R = \bigoplus_{n \ge 0}R_n$ be a one-dimensional Noetherian graded integral domain. Throughout this section, we assume $k=R_0$ is a field, and $R_n \ne (0)$ and $R_{n+1} \ne (0)$ for some $n \ge 0$. 
Let $W$ be the set of non-zero homogeneous elements in $R$. Note that the localization $W^{-1}R = K[t, t^{-1}]$ of $R$ with respect to $W$ is a {\it simple} graded ring, i.e., every non-zero homogeneous element is invertible, where $t$ is a homogeneous element of degree $1$ which is transcendental over $k$, and $K=[\, W^{-1}R\, ]_0$ is a field. There is an exact sequence
$$
0 \to R \to K[t, t^{-1}] \to \rmH^1_\m(R) \to 0
$$
of graded $R$-modules, where $\m$ denotes the graded maximal ideal of $R$ and $\rmH^1_\m(R)$ is the $1$st graded local cohomology module of $R$ with respect to $\m$. 
 As $R_0=k$ and $[\rmH^1_\m(R)]_0$ is a finite-dimensional $k$-vector space (remember that $\rmH^1_\m(R)$ is an Artinian $R$-module), the field extension $K/k$ is finite. Hence $k=K$, if $k$ is an algebraically closed field. 
Let $\overline{R}$ be the integral closure of $R$ in its quotient field $\rmQ(R)$. 

Recall that an integral domain $A$ is called {\it $N$-$1$} if the integral closure of $A$ in $\rmQ(A)$ is a finite $A$-module; and {\it $N$-$2$} if, for any finite field extension $L$ of $\rmQ(A)$, the integral closure of $A$ in $L$ is a finite $A$-module. We say that a Noetherian ring $B$ is {\it Nagata} if $B/\p$ is $N$-$2$ for every $\p \in \Spec B$ (\cite[(31.A) Definitions]{Matsumura}). Note that every field is Nagata, and every finitely generated algebra over a Nagata ring is Nagata (\cite[(31.H) Theorem]{Matsumura}). Thus every finitely generated algebra over a field which is an integral domain is a Nagata domain, so it is $N$-$1$. The reader may consult \cite[Section 31]{Matsumura} for the details. 

We begin with the following which was pointed out by S. Goto. 

\begin{lem}
The equality $\overline{R} = K[t]$ holds in $\rmQ(R)$. 
\end{lem}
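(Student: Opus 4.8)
The plan is to prove the two inclusions $K[t] \subseteq \overline{R}$ and $\overline{R} \subseteq K[t]$ separately, exploiting the fact that both $\overline{R}$ and $K[t]$ are integrally closed in the common fraction field $\rmQ(R) = \rmQ(K[t,t^{-1}]) = K(t)$.

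First I would establish $K[t] \subseteq \overline{R}$. Since the extension $K/k$ is finite and $k$ is a field, every element of $K$ is a root of a monic polynomial with coefficients in $k = R_0 \subseteq R$, hence is integral over $R$; thus $K \subseteq \overline{R}$. To see that $t \in \overline{R}$, I would invoke the standing hypothesis $R_n \neq (0)$: choosing a nonzero $a \in R_n$ and writing $a = c\, t^n$ with $c \in K^{\times}$ (legitimate because the graded inclusion gives $R_n \hookrightarrow [W^{-1}R]_n = K t^n$), one obtains $t^n = c^{-1} a \in \overline{R}$, as $c^{-1} \in K \subseteq \overline{R}$. Consequently $t$ is a root of the monic polynomial $X^n - t^n$ over $\overline{R}$, i.e. $t$ is integral over $\overline{R}$; since $\overline{R}$ is integrally closed in $\rmQ(R)$ and $t \in \rmQ(R)$, this forces $t \in \overline{R}$. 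As $\overline{R}$ is a ring containing both $K$ and $t$, I conclude $K[t] \subseteq \overline{R}$.

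For the reverse inclusion $\overline{R} \subseteq K[t]$, I would first note that the grading gives $R = \bigoplus_{m \ge 0} R_m$ with $R_m \subseteq [W^{-1}R]_m = K t^m$ for every $m \ge 0$, whence $R \subseteq K[t]$. Now $K[t]$ is a polynomial ring over a field, hence a normal domain, integrally closed in its fraction field $K(t) = \rmQ(R)$. Any element of $\overline{R}$ is integral over $R$, a fortiori over $K[t]$, and lies in $\rmQ(R)$; by normality of $K[t]$ it therefore belongs to $K[t]$. This yields $\overline{R} \subseteq K[t]$, and combining the two inclusions gives $\overline{R} = K[t]$. The argument is essentially formal once the normality of both rings is in hand; the one step that requires genuine care is the passage from $t^n \in \overline{R}$ to $t \in \overline{R}$, since $t$ itself need not be a ratio of elements of $R$ and only $t^n$ is visibly integral. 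This is exactly where the integral closedness of $\overline{R}$ (guaranteed by $R$ being a Nagata, hence $N$-$1$, domain) does the essential work, and where the hypothesis that $R$ has nonzero components in two consecutive degrees — ensuring that $W^{-1}R$ carries a degree-one element $t$ — enters.
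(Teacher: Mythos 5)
Your proof is correct, but it takes a genuinely different route from the paper's. The paper works inside the graded structure of $\overline{R}$ itself: it invokes the Nagata property to get $\overline{R}$ module-finite over $R$, observes that the graded maximal ideal $N$ of $\overline{R}$ is principal (generated by a homogeneous element $f$ of some degree $q>0$) because $\overline{R}_N$ is a DVR, writes $\overline{R}=L[f]$ with $L=[\,\overline{R}\,]_0$, and then compares the simple graded rings $L[f,f^{-1}]$ and $K[t,t^{-1}]$ to force $L=K$ and $q=1$. You instead prove two inclusions: $\overline{R}\subseteq K[t]$ because $R\subseteq K[t]$ and $K[t]$ is normal with fraction field $K(t)=\rmQ(R)$; and $K[t]\subseteq\overline{R}$ because $K$ is integral over $k$ and some power $t^m$ ($m\ge 1$) lies in $\overline{R}$, whence $t$ is integral over $\overline{R}$ and hence lies in $\overline{R}$. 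Your argument is more elementary --- it needs neither the finiteness of $\overline{R}$ over $R$, nor the gradedness of $\overline{R}$, nor the DVR at $N$; only normality of $K[t]$ and transitivity of integral dependence. The paper's route, by contrast, produces structural byproducts (the homogeneous uniformizer of degree $1$, the identification $[\,\overline{R}\,]_0=K$) in the spirit of the graded machinery it uses later. Two small points to tidy: the integral closedness of $\overline{R}$ in $\rmQ(R)$ is automatic from transitivity of integral dependence and does not require the Nagata/$N$-$1$ property (Nagata only gives module-finiteness, which you never use); and if the $n$ of the standing hypothesis happens to be $0$, the polynomial $X^n-t^n$ is not monic of positive degree, so you should use $R_{n+1}\ne(0)$ and $X^{n+1}-t^{n+1}$ instead (for $n=0$ this even gives $t\in\overline{R}$ directly).
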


\begin{proof}
Note that $\overline{R}$ is a graded ring and $\overline{R} \subseteq W^{-1}R=K[t, t^{-1}]$; see e.g., \cite[page 157]{ZS}. As the field $k$ is Nagata, so is the finitely generated $k$-algebra $R$. Thus $\overline{R}$ is a finite $R$-module. As $R_n = (0)$ for all $n<0$ and $R_0 = k$, we see that $[\,\overline{R}\,]_n = (0)$ for all $n<0$, $L=[\,\overline{R}\,]_0$ is a field, and $k \subseteq L \subseteq K$. Set $N = \bigoplus_{n > 0}[\,\overline{R}\,]_n$. Since the local ring $\overline{R}_N$ of $\overline{R}$ at the maximal ideal $N$ is a DVR, the ideal $N$ is principal. We choose a homogeneous element $f \in \overline{R}$ of degree $q>0$ such that $N = f\overline{R}$. Hence $\overline{R} = L[N] = L[f] \subseteq W^{-1}R =K[t, t^{-1}]$. 
Besides, because $\overline{R}[f^{-1}] = L[f, f^{-1}]$ is a simple graded ring and $R \subseteq \overline{R}[f^{-1}]$, we have $W^{-1}R \subseteq \overline{R}[f^{-1}]=L[f, f^{-1}]$. Therefore 
$$
K[t, t^{-1}] = L[f, f^{-1}]
$$
so that $K=L$ and $q = 1$. This shows $\overline{R} = L[f] = K[f] = K[t]$, as claimed. 
\end{proof}

%An ideal $I$ of $R$ is called {\it regular} if it contains a non-zerodivisor on $R$. 

For $R$-submodules $X$ and $Y$ of $\rmQ(R)$, let $X:Y = \{a \in \rmQ(R) \mid aY \subseteq X\}$. If we consider ideals $I, J$ of $R$, we set $I:_RJ =\{a \in R \mid aJ \subseteq I\}$. Hence $I:_RJ = (I:J) \cap R$.
%A {\it fractional ideal} $I$ of $R$ is a finitely generated $R$-submodule of $\rmQ(R)$ satisfying $\rmQ(R)\cdot I = \rmQ(R)$. 
%For fractional ideals $I$ and $J$, we have a natural identification $I:J \cong \Hom_R(J, I)$. In particular, the endomorphism algebra $\End_R(I)\cong I:I$ of a fractional ideal $I$ forms a commutative and birational module-finite extension of $R$. 

\begin{rem}
Let $I$ be a non-zero graded ideal of $R$. It is straightforward to check that $R:I$ is a graded $R$-submodule of $K[t, t^{-1}]$ which contains $R$. In addition, the natural isomorphism $R:I  \overset{\cong}{\longrightarrow} \Hom_R(I, R), \, \alpha \mapsto (x \mapsto \alpha x)$ is graded. Thus, provided $k=K$, every homogeneous component of $\Hom_R(I, R)$ has dimension, as a $k$-vector space, at most $1$. 
\end{rem}

For a Cohen-Macaulay graded ring $A=\bigoplus_{n\ge 0}A_n$ such that $A_0$ is a local ring, we set $\rma (A)=\operatorname{max} \{n \in \Z \mid [\rmH^d_\fkM({A})]_n \ne (0) \}$ which is called the {\it $\rma$-invariant} of $A$ (\cite[Definition (3.1.4)]{GW}). Here, $\fkM$ denotes the unique graded maximal ideal of $A$, $d=\dim A$, and $\{[\rmH_\fkM^d(A)]_n\}_{n \in \Bbb Z}$ is the homogeneous components of the $d$-th graded local cohomology module $\rmH_\fkM^d(A)$ of $A$ with respect to $\fkM$. 
When $A$ admits the graded canonical module $\rmK_A$, one has $\rma(A) = -\operatorname{min} \{ n \in \Z \mid [\K_A]_n \ne (0) \}$.

Let $M$ be a graded $R$-module and $\ell$ an integer. 
Let $M(\ell)$ denote the graded $R$-module whose underlying $R$-module is the same as that of the $R$-module $M$ and the grading is given by $[M(\ell)]_n = M_{\ell + n}$ for all $n \in \Bbb Z$. When $M$ is finitely generated, we denote by $\mu_R(M)$ the minimal number of generators of $M$. 

With this notation, we furthermore assume $R$ admits a graded canonical module $\rmK_R$. 
Let $(-)^{\vee} = \Hom_R(-, \rmK_R)$ denote the canonical dual functor.  
We then have the following.

\begin{lem}\label{1.5}
Suppose that $R$ is a Gorenstein ring. 
Let $I$ be a graded ideal of $R$ such that $R/I$ is Gorenstein and $\mu_R(I) \ge 2$. Then the following assertions hold true. 
\begin{enumerate}[$(1)$]
\item $[I^{\vee}]_{-\rma(R)} \ne (0)$ and $[I^{\vee}]_{-\rma(R/I)} \ne (0)$.
\item $\min\left\{ n \in \bbZ \mid \left[I^{\vee}\right]_n \ne (0)\right\} = \min\{-\rma(R), -\rma(R/I)\}$. 
\item $\mu_R(I^{\vee}) = 2$. 
\item If $k=K$, then $\rma(R) \ne \rma(R/I)$.
\item $I^{\vee} = R f + R g$ for some $f \in [I^{\vee}]_{-\rma(R)}$ and $g \in [I^{\vee}]_{-\rma(R/I)}$. 
\end{enumerate}
\end{lem}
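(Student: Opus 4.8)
The plan is to deduce all five assertions from a single graded short exact sequence obtained by dualizing $0 \to I \to R \to R/I \to 0$ with the functor $(-)^{\vee} = \Hom_R(-, \rmK_R)$. First I would record the two degree facts I will use repeatedly: since $R$ is Gorenstein of dimension $1$ we have $\rmK_R \cong R(\rma(R))$, so $[\rmK_R]_n \ne (0)$ exactly for $n \ge -\rma(R)$; and since $\mu_R(I) \ge 2$ forces $I \ne (0)$, the ring $R/I$ is Artinian and Gorenstein, whence $\rmK_{R/I} \cong (R/I)(\rma(R/I))$ and $[\rmK_{R/I}]_n \ne (0)$ exactly for $-\rma(R/I) \le n \le 0$. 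Now $I$ is a non-zero ideal of a one-dimensional domain, hence a maximal Cohen--Macaulay $R$-module, while $R/I$ is Cohen--Macaulay of dimension $0$. Using the standard vanishing $\Hom_R(R/I, \rmK_R) = 0 = \Ext^1_R(R, \rmK_R)$ together with $\Ext^1_R(R/I, \rmK_R) \cong \rmK_{R/I}$ and $R^{\vee} = \rmK_R$, the long exact sequence of $(-)^{\vee}$ collapses to
\begin{equation*}
0 \to \rmK_R \to I^{\vee} \to \rmK_{R/I} \to 0. \tag{$\ast$}
\end{equation*}
Every assertion will be read off from $(\ast)$.

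For (1) and (2) I would pass to homogeneous components: for each integer $n$ the sequence $0 \to [\rmK_R]_n \to [I^{\vee}]_n \to [\rmK_{R/I}]_n \to 0$ is exact, so $[I^{\vee}]_n \ne (0)$ precisely when $[\rmK_R]_n \ne (0)$ or $[\rmK_{R/I}]_n \ne (0)$. Evaluating at $n = -\rma(R)$ and at $n = -\rma(R/I)$ yields (1), and combining the two degree ranges recalled above shows that the smallest degree in which $I^{\vee}$ is non-zero is $\min\{-\rma(R), -\rma(R/I)\}$, which is (2). For (3), the inequality $\mu_R(I^{\vee}) \le \mu_R(\rmK_R) + \mu_R(\rmK_{R/I}) = 1 + 1 = 2$ is immediate from $(\ast)$ by right exactness of $-\otimes_R k$. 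For the reverse inequality I would show $I^{\vee}$ is not cyclic: as the canonical dual of the maximal Cohen--Macaulay module $I$ it is torsion-free of rank $1$, so if it were cyclic it would be isomorphic to $R$ up to a shift, and then $I \cong I^{\vee\vee} \cong \rmK_R$ would be principal, contradicting $\mu_R(I) \ge 2$.

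For (4), assume $k = K$ and suppose $\rma(R) = \rma(R/I) =: a$. The Remark, applied through the identification $I^{\vee} \cong \Hom_R(I, R)(\rma(R))$, gives $\dim_k [I^{\vee}]_n \le 1$ for every $n$. On the other hand $(\ast)$ in degree $-a$ reads $0 \to [\rmK_R]_{-a} \to [I^{\vee}]_{-a} \to [\rmK_{R/I}]_{-a} \to 0$ with both outer terms non-zero (they are the minimal-degree components of $\rmK_R$ and $\rmK_{R/I}$), forcing $\dim_k [I^{\vee}]_{-a} \ge 2$, a contradiction. Finally, for (5), I would take $f \in [I^{\vee}]_{-\rma(R)}$ to be the image of a generator of $\rmK_R$ under $(\ast)$ and $g \in [I^{\vee}]_{-\rma(R/I)}$ to be a homogeneous lift of a generator of the cyclic module $\rmK_{R/I}$. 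Then $Rf$ equals the image of $\rmK_R$, i.e.\ the kernel of $I^{\vee} \twoheadrightarrow \rmK_{R/I}$, while $Rg$ surjects onto $\rmK_{R/I}$; a one-line diagram chase then gives $I^{\vee} = Rf + Rg$.

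The main obstacle I anticipate is setting up $(\ast)$ cleanly --- in particular justifying $\Ext^1_R(R/I, \rmK_R) \cong \rmK_{R/I}$ and the two vanishings --- which rests on the local-duality behaviour of the canonical module over a one-dimensional Cohen--Macaulay ring; once $(\ast)$ is established the five statements are short. A secondary point requiring care is bookkeeping of the grading shifts, namely that $\rmK_R \cong R(\rma(R))$ and $\rmK_{R/I} \cong (R/I)(\rma(R/I))$ under the convention $\rma(A) = -\min\{n \mid [\rmK_A]_n \ne (0)\}$, since the precise degrees $-\rma(R)$ and $-\rma(R/I)$ are exactly what (1), (2), (4) and (5) concern.
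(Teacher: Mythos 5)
Your proposal is correct and follows essentially the same route as the paper: both dualize $0 \to I \to R \to R/I \to 0$ into the graded short exact sequence $0 \to R(\rma(R)) \to I^{\vee} \to (R/I)(\rma(R/I)) \to 0$ and read off all five assertions degree by degree, including the same use of $I^{\vee\vee}\cong I$ for (3), the one-dimensionality of homogeneous components for (4), and a Nakayama-type lift of generators for (5).
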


\begin{proof}
We set $a = \rma(R)$, $b = \rma(R/I)$, and $n= \min\left\{ n \in \bbZ \mid \left[I^{\vee}\right]_n \ne (0)\right\}$. 
By taking the functor $(-)^{\vee}$ to the exact sequence $0 \to I \to R \to R/I \to 0$, we get the sequence
$$
(*) \quad\quad\quad\quad\quad\quad0 \to R(a) \to I^{\vee} \to (R/I)(b) \to 0 \quad\quad\quad\quad\quad\quad \quad
$$
of graded $R$-modules, because $\rmK_R \cong R(a)$ and $\Ext^1_R(R/I, \rmK_R) \cong \rmK_{(R/I)} \cong (R/I)(b)$. This shows $[I^{\vee}]_{-a} \ne (0)$, $[I^{\vee}]_{-b} \ne (0)$, and $n= \min\{-a, -b\}$. Besides, the exact sequence $(*)$ implies $\mu_R(I^{\vee}) \le 2$. As $I^{\vee \vee} \cong I$ and $\mu_R(I) \ge 2$,  we get $\mu_R(I^{\vee}) = 2$. This proves the assertions $(1)$, $(2)$, and $(3)$. 

If $k=K$, then all the homogeneous components of $I^{\vee} \cong \Hom_R(I, R)(a)$, as a $k$-vector space, have dimension at most $1$. Thus $a \ne b$, and the assertion $(4)$ holds. 

%To show the assertion $(5)$, suppose $a \ne b$. 
Recall that $\m$ is the graded maximal ideal of $R$. 
By applying the functor $R/\m \otimes_R -$ to the sequence $(*)$, we have the exact sequence of the form:
$$
(R/\m)(a) \overset{\xi}{\to} I^{\vee}/\m I^{\vee} \overset{\eta}{\to} (R/\m)(b) \to 0.
$$
As $\mu_R(I^{\vee}) = 2$, the map $\xi$ is injective. We choose $f \in [I^{\vee}]_{-a}$ and $g \in [I^{\vee}]_{-b}$ such that $\overline{f} = \xi(1)$ and $\eta(\overline{g}) = 1$, where $\overline{*}$ denotes the image in $I^{\vee}/\m I^{\vee}$. 
%Recall that $a \ne b$ and 
Then the images of $f, g$ form a $k$-basis of $I^{\vee}/\m I^{\vee}$. Hence $I^{\vee} = R f + R g$ by Nakayama's lemma.
\end{proof}

\begin{rem}
If $R$ is a numerical semigroup ring over a field $k$, then $k=K$. Whereas, if $k=K$, e.g., $k$ is an algebraically closed field, then the ring $R$ is isomorphic to a semigroup ring of a numerical semigroup (\cite[Proposition (2.2.11)]{GW}).
\end{rem}
%%%%%%%%%%%%%%%%%%%%%%%%%%%%%%%%%%%%%%%%%%%%%%%%%%%%%%%%%%%%%%%%%%%%%%%%%%%%%%%%%%%%%%%%%%%%%%%%%%%%%%%%%%%%%%%%%%%%%%%%

\section{Proof of Theorem \ref{main}}

We first fix the notation on which all the results in this section are based.

\begin{setup}
Let $\Bbb N$ be the set of non-negative integers and  $a_1, a_2, \ldots, a_{\ell} \in \bbZ~(\ell \ge 1)$ be positive integers such that $\gcd (a_1, a_2, \ldots, a_{\ell}) = 1$. We set
$$
H = \left<a_1, a_2, \ldots, a_\ell\right>=\left\{\sum_{i=1}^\ell c_ia_i  ~\middle|~   c_i \in \Bbb N~\text{for~all}~1 \le i \le \ell \right\}
$$
and call it the {\it numerical semigroup} generated by $\{a_i\}_{1 \le i \le \ell}$. The reader may consult the book \cite{RG} for the fundamental results on numerical semigroups. 
Let $S = k[t]$ denote the polynomial ring over a field $k$, and define 
$$
k[H] = k[t^{a_1}, t^{a_2}, \ldots, t^{a_\ell}] \subseteq S
$$
which we call the {\it semigroup ring} of $H$ over $k$. 
The ring $R=k[H]$ forms a Noetherian integral domain with $\dim R=1$ and is a $\Bbb Z$-graded subring of $S$ whose grading $\{R_n\}_{n \in \Bbb Z}$ is given by 
$$
R_n=
\begin{cases}
\ kt^n & \text{if} \ \ n \in H, \\
\ (0) & \text{otherwise}.
\end{cases}
$$
%\noindent
In addition, $S$ is a birational module-finite extension of $R$, so that  $\overline{R} = S$, where $\overline{R}$ denotes the integral closure of $R$ in its quotient field $\rmQ(R)$.
Let 
$$
\rmc(H) = \min \{n \in \Bbb Z \mid m \in H~\text{for~all}~m \in \Bbb Z~\text{such~that~}m \ge n\},
$$
and set $\rmf(H) = \max ~(\Bbb Z \setminus H)$ which is called the {\it Frobenius number} of $H$.
By \cite{GW}, we get
\begin{center}
$R:S = t^{\rmc(H)}S$ \  and \ $\rmf(H) =  \rmc(H) -1 = \rma(R)$.
\end{center}
%We set $\fkc = R:\overline{R}$. Then $\fkc = t^{c}\overline{R}$. 
Note that, for each non-zero ideal $I$ in $R$, we have $\rma(R/I) \in H$, whence $\rma(R/I) \ne \rma(R)$; see also Lemma \ref{1.5} (4). We set $a = \rma(R)$ and $c = \rmc(H)$. 
\end{setup}

%Frobenius numberは, 環論的にはa-invariantである。a-invariantの定義を書く。特に, Artinの場合は斉次成分が消える消えないで判断される。$R/I$の場合で特に数値半群環で考えると, 必ず$b=\rma(R/I) \in H$となる。よって, $a \ne b$である。

\begin{rem}\label{intcld}
If $R$ is integrally closed, then $c=0$ and there is no non-principal ideals in $R$. 
\end{rem}

The following plays a key in our argument. 

\begin{prop}\label{key}
Suppose that $R=k[H]$ is a Gorenstein ring. Then the equality
$$
\#\left\{I \mid I ~\text{is a graded ideal of} ~R ~\text{such that}~ \rma(R/I) < a~\text{and}~\mu_R(I) \ge 2\right\} \, = \, \frac{c}{2} 
$$
holds.
\end{prop}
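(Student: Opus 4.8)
Let $\mathcal I$ be the set of graded ideals $I$ of $R$ with $R/I$ Gorenstein, $\rma(R/I)<a$ and $\mu_R(I)\ge 2$. The plan is to produce a bijection $\Phi\colon\mathcal I\to H\cap[0,a)$ and then to check combinatorially that $\#(H\cap[0,a))=c/2$. The main tool is Lemma \ref{1.5}: for a numerical semigroup ring $k=K$, so the graded pieces $[K[t,t^{-1}]]_n=k\,t^{n}$ are one-dimensional, and hence so are all homogeneous components of any graded $R$-submodule of $K[t,t^{-1}]$. For $I\in\mathcal I$, realizing $I^{\vee}=\rmK_R:I$ inside $K[t,t^{-1}]$, Lemma \ref{1.5}(1),(5) give $I^{\vee}=Rf+Rg$ with $0\ne f\in[I^{\vee}]_{-a}$ and $0\ne g\in[I^{\vee}]_{-b}$, where $b=\rma(R/I)$; by one-dimensionality $f,g$ are scalar multiples of $t^{-a},t^{-b}$, so $I^{\vee}=Rt^{-a}+Rt^{-b}$ is completely determined by $b$. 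This rigidity is the crux of the injectivity argument.

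I would take $\Phi(I)=b=\rma(R/I)$, which lands in $H\cap[0,a)$ by the Setup ($\rma(R/I)\in H$) and the hypothesis $\rma(R/I)<a$. To see $\Phi$ is a bijection I would write down its inverse: for $b\in H\cap[0,a)$ set $d=a-b$ and let $I_b$ be the graded ideal whose degree support is $\{m\in H\mid m+d\in H\}$, i.e. $I_b=R\cap t^{-d}R$. One checks readily that this support is a relative ideal and that $\rma(R/I_b)=b$: indeed $b+d=a\notin H$ places $b$ in the complement $C:=H\setminus(\text{support of }I_b)$, while any $m>b$ has $m+d>a$, hence $m+d\in H$ and $m\notin C$, so $\max C=b$. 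Thus $\Phi(I_b)=b$. Conversely $I_{\Phi(I)}=I$ follows by dualizing the rigidity of the first paragraph once more: with $\rmK_R=t^{-a}R$ in $K[t,t^{-1}]$ and $I\cong I^{\vee\vee}$, I obtain $I=\rmK_R:I^{\vee}=(t^{-a}R):(t^{-a}R+t^{-b}R)=R\cap t^{-d}R=I_b$.

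The delicate point—and the only place the symmetry of $H$ is genuinely used—is that $R/I_b$ is Gorenstein; this is the step I expect to be the main obstacle. Here I would invoke symmetry in the form $z\in H\iff a-z\notin H$ to rewrite $C=\{m\in H\mid b-m\in H\}$, a set stable under $m\mapsto b-m$ with top element $b$. The socle of $R/I_b$ is spanned by those $t^{h}$ with $h\in C$ and $\m\,t^{h}\subseteq I_b$; if $h\in C$ with $h\ne b$ then $b-h\in C$ is positive and $t^{h}\cdot t^{\,b-h}=t^{b}\notin I_b$, so $t^{h}$ is not in the socle, whereas $t^{b}$ is. Hence $\Soc(R/I_b)=k\,t^{b}$ is one-dimensional and $R/I_b$ is Gorenstein. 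To confirm $I_b\in\mathcal I$ it remains to note $\mu_R(I_b)\ge2$, i.e. that $I_b$ is non-principal; this is forced by $\rma(R/I_b)=b<a$, since a proper principal ideal $(t^{m})$ with $m\ge1$ satisfies $\rma(R/(t^{m}))=\rma(R)+m=a+m>a$ (a regular hyperplane section raises the $\rma$-invariant by the degree). Thus $\Phi$ is a bijection $\mathcal I\to H\cap[0,a)$. Finally $\#(H\cap[0,a))=c/2$: since $a=c-1\notin H$, the integers $0,\dots,c-1$ split into $H\cap[0,c)=H\cap[0,a)$ and the gaps $\bbN\setminus H$, and symmetry of $H$ gives $\#(H\cap[0,c))=\#(\bbN\setminus H)$; as these sum to $c$, each equals $c/2$, whence $\#\mathcal I=c/2$.
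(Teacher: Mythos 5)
Your proof is correct, and at bottom it implements the same duality correspondence as the paper, though with a different parametrization and more elementary verifications. The paper indexes the ideals by the gaps $m \in \bbN \setminus H$, via $J = R:I$: it shows $R \subseteq J \subseteq R:\fkc = \overline{R}$ using \cite[Bemerkung 2.5]{HK} and $\mu_R(J) = 2$ using Lemma \ref{1.5}\,(3), so that $J = (1, t^m)$, and recovers $I = R:J$ by Gorenstein biduality. You instead index by $b = \rma(R/I) \in H \cap [0, a)$; the two index sets match under $b = a-m$ by symmetry, and your ideals $I_b = R \cap t^{-d}R = R:_R t^{d}$ with $d = a-b$ are literally the paper's $R:(1,t^m)$. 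For injectivity you pin down $I^{\vee} = Rt^{-a} + Rt^{-b}$ from Lemma \ref{1.5}\,(1),(5) plus one-dimensionality of graded components, then dualize back via $I = \rmK_R : I^{\vee}$ with $\rmK_R = t^{-a}R$; the step upgrading $I \cong I^{\vee\vee}$ to equality inside $k[t,t^{-1}]$ deserves a half-sentence (a degree-zero graded isomorphism of fractional ideals is a homothety by a scalar), which is exactly the rigidity the paper spells out in the proof of Theorem \ref{main}. In the converse direction, where the paper gets Gorensteinness of $R/I$ from cyclicity of $\Ext^1_R(R/I, \rmK_R)$ and $\mu_R(I) \ge 2$ from $I^{\vee} \cong J$, your substitutes --- the socle computation $\Soc(R/I_b) = k\,t^{b}$ via the symmetric set $C = \{m \in H \mid b-m \in H\}$, and the observation that a principal graded ideal $(t^m)$ forces $\rma(R/(t^m)) = a+m > a$ --- are correct, purely combinatorial, and give you for free the formula $\rma(R/(R:_Rt^m)) = a-m$, which the paper only records in Corollary \ref{4.2}.

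One remark on your reading of the statement: you added the hypothesis that $R/I$ is Gorenstein to the set being counted, and that is the right (and necessary) reading --- it is the set $\calY_R$ in the paper's own proof and the form in which the proposition is invoked in Theorem \ref{main} and Corollary \ref{4.2}. It is genuinely needed: for $R = k[t^3, t^4]$ the ideal $I = (t^6, t^7, t^8) = \fkc$ satisfies $\rma(R/I) = 4 < a = 5$ and $\mu_R(I) = 3$, yet $\Soc(R/I)$ is two-dimensional; such ideals would break both the count and the injectivity of your $\Phi$, since this $I$ shares the value $b = 4$ with $R:_Rt$. Your blanket Gorenstein hypothesis is precisely what makes Lemma \ref{1.5} applicable and $I^{\vee}$ two-generated, so your argument is sound as written.
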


\begin{proof}
Let $\calY_R$ be the set of graded ideals $I$ of $R$ such that $R/I$ is Gorenstein, $\rma(R/I) < a$, and $\mu_R(I) \ge 2$. By Remark \ref{intcld}, we may assume $R \ne \overline{R}$. Thus $\calY_R \ne \emptyset$. 
For each $I \in \calY_R$, since $\rma(R/I) < a$, we then have $R_m \subseteq I$ for all $m \ge c = a+1$. By setting $J = R:I$, we see that $J$ is a graded ideal of $R$ and  
$$
R \subseteq J \subseteq R:\fkc = R:(R:\overline{R}) = \overline{R}
$$
where the second inclusion follows from $\fkc \subseteq I$ and the last equality holds by \cite[Bemerkung 2.5]{HK} (remember that $R$ is a Gorenstein ring). This shows $J = (1,\, t^m)$ for some $m \in \Bbb N \setminus H$. So we can consider the map 
$$
\Phi : \calY_R \to {\Bbb N} \setminus H
$$ 
defined by $\Phi(I) = m$ for each $I \in \calY_R$, where $R : I = (1,\, t^m)$. 

Conversely, for each $m \in  {\Bbb N} \setminus H$, we set $J = (1,\, t^m)$. Then $R \subsetneq J \subseteq \overline{R} =k[t]$. By setting $I = R:J$, we have
$$
\fkc = R:\overline{R} \subseteq R:J = I \subsetneq R
$$
which yield that $\rma(R/I) < a$, $\mu_R(I) \ge 2$, and the ring $R/I$ is Gorenstein. Indeed, since $t^c \overline{R} = \fkc \subseteq I$ and $\rma(R/I) \in H$, we have $\rma(R/I) < a$. The $\rmK_R$-dual $(-)^{\vee}$ of the exact sequence $0 \to I \to R \to R/I \to 0$ induces the sequence
$$
0 \to R(a) \overset{\varphi}{\to} I^{\vee} \to \Ext^1_R(R/I, \rmK_R) \to 0
$$
of graded $R$-modules, because $\rmK_R \cong R(a)$. Let $f = \varphi(1)$. Then $f \in [I^{\vee}]_{-a}$ forms a part of a minimal basis of $I^{\vee}$.  
As $I^{\vee} \cong J$ and $\mu_R(J) = 2$, we get $\mu_R(I) \ge 2$; while the $R$-module $\Ext^1_R(R/I, \rmK_R)$ is cyclic, so that $R/I$ is a Gorenstein ring, because $\Ext^1_R(R/I, \rmK_R) \cong \rmK_{(R/I)}$ is the canonical module of $R/I$. Here, the proof of above especially shows that if $\rma(R/I)<a$ then $R/I$ is Gorenstein. 
Hence, we define the map
$$
\Psi : {\Bbb N} \setminus H \to \calY_R, \ \ m \mapsto R:(1,\, t^m)   
$$
and it is straightforward to check the composite maps $\Phi \circ \Psi$ and $\Psi \circ \Phi$ are identity. In particular, the map $\Phi$ is bijective. Therefore
$$
\#\left\{I \mid I ~\text{is a graded ideal of} ~R, \, \rma(R/I) < a, \, \text{and} \ \mu_R(I) \ge 2\right\} = \#\calY_R = \#({\Bbb N} \setminus H) = \frac{c}{2} 
$$
where the last equality follows from the fact that $H$ is symmetric, i.e., $R$ is Gorenstein. This completes the proof. 
\end{proof}

We are ready to prove Theorem \ref{main}.

\begin{proof}[Proof of Theorem \ref{main}]
Let $\calX_R$ be the set of graded ideals $I$ of $R$ such that $R/I$ is Gorenstein and $\mu_R(I) \ge 2$. By Remark \ref{intcld}, we may assume $R \ne \overline{R}$. So $\calX_R \ne \emptyset$. 
By Proposition \ref{key}, it suffices to show that the number of ideals $I \in \calX_R$ with $\rma(R/I) > a$ is a half of the conductor $c$ of $H$.

For each $I \in \calX_R$, we have $\mu_R(I^{\vee}) = 2$, so we can write
\begin{center}
$I^{\vee} = R f + R g$ \ \ for some \! $f \in [I^{\vee}]_{-a}$ and $g \in [I^{\vee}]_{-\rma(R/I)}$.
\end{center}
Set $b= \rma(R/I)$. 
Since $(0):_Rg =(0)$, we get the exact sequence
$$
0 \to R(b) \overset{\xi}{\to} I^{\vee} \to C \to 0
$$
of graded $R$-modules, where $\xi(1) = g$ and $C = \Coker \xi$. We consider the graded ideal $J = (0):_R C$ of $R$. As $C$ has a finite length and $C \cong I^{\vee}/Rg \ne (0)$, we get $(0) \ne J \subsetneq R$. Besides, we have the isomorphism 
$$
C \cong R \overline{f} \cong (R/J)(a)
$$  
as a graded $R$-module, where $\overline{*}$ denotes the image in $I^{\vee}/Rg$. 
Hence we obtain the sequence
$$
0 \to R(b) \to I^{\vee} \to (R/J)(a) \to 0
$$
of graded $R$-modules. By applying the functor $(-)^{\vee}$ to the above sequence, we have 
$$
0 \to I \to R(a-b) \to \Ext^1_R(R/J, \rmK_R)(-a) \to 0
$$
because $I^{\vee \vee} \cong I$ and $R(b)^{\vee} = \Hom_R(R(b), \rmK_R) \cong \Hom_R(R(b), R(a)) \cong R(a-b)$. In particular, $\rmK_{(R/J)} \cong \Ext^1_R(R/J, \rmK_R)$ is cyclic; hence $R/J$ is Gorenstein. By letting $\alpha = \rma(R/J)$, we have $\rmK_{(R/J)} \cong (R/J)(\alpha)$. Therefore, by changing the shift by $b-a$, we get the exact sequence
$$
0 \to I(b-a) \overset{\psi}{\to} R \to (R/J)(\alpha -a + b-a) \to 0
$$
of graded $R$-modules. The degree $0$ part of the following isomorphism
$$
R/\Im \psi \cong (R/J)(\alpha -2a + b)
$$
induces $\alpha -2a + b = 0$; while $I(b-a) \cong \Im \psi \cong J$. 
Hence, $\rma(R/J) = \alpha = 2a -b$ and $I \cong J(a-b)$ as a graded $R$-module. In particular, $\mu_R(J) \ge 2$. Thus $J \in \calX_R$.

Let $W$ be the set of non-zero homogeneous elements in $R$. Consider the simple graded ring $W^{-1}R = k[t, t^{-1}]$, where $t$ is a homogeneous element of degree $1$ which is transcendental over $k$.
Hence we have the commutative diagram below:
$$
\xymatrix{
k[t, t^{-1}](a-b) = W^{-1}(J(a-b)) \ar[r]^{\ \ \ \ \  \ \ \ \ \ \ \  \cong}  & W^{-1}I = k[t, t^{-1}]\\
J(a-b) \ar[r]^{\cong} \ar[u] & I = t^{b-a}J\ar[u]
}
$$
Note that the induced isomorphism $k[t, t^{-1}](a-b) \overset{\cong}{\longrightarrow} k[t, t^{-1}]$ is given by the homothety of homogeneous element of degree $b-a$. Therefore $I = t^{b-a}J$. 

%\stackrel{\sim}{\longrightarrow} 

To sum up this argument, for each $I \in \calX_R$, there exists a graded ideal $J \in \calX_R$ satisfying 
\begin{center}
$\rma(R/J) = 2 a - \rma(R/I)$ \ \ and \  \ $I = t^{\rma(R/I)-a}J$.
\end{center}
This shows, if $\rma(R/I) > a$ (resp. $\rma(R/I) < a$), then $\rma(R/J) < a$ (resp. $\rma(R/J) > a$). 
So, there is a one-to-one correspondence between the set of ideals $I \in \calX_R$ with $\rma(R/I) > a$, and the set of ideals $J \in \calX_R$ with $\rma(R/J) < a$. Finally we conclude that 
$$
\# \calX_R  = \# \{I \in \calX_R\mid \rma(R/I)>a \} + \# \{I \in \calX_R\mid \rma(R/I)<a \} = \frac{c}{2} + \frac{c}{2} = c
$$ 
as desired. 
\end{proof}

\if0
To finish this proof, we will show that $b-a \in {\Bbb N} \setminus H$. Assume the contrary, i.e., $b-a \in H$. The exact sequence 
$$
0 \to J/t^{b-a}J \to R/I \to R/J \to 0
$$
and $R/I$ is an Artinian Gorenstein ring show that the socle of $J/t^{b-a}J$ is one-dimensional. Thus $J$ has the Cohen-Macaulay type $1$ which is equivalent to saying that $J^{\vee}$ is cyclic. This makes a contradiction because $\mu_R(I^{\vee}) = 2$. Hence $b-a \in {\Bbb N} \setminus H$. Finally, as $H$ is symmetric, the number of possible values of $b-a$ is at most $\#({\Bbb N} \setminus H) = \frac{c}{2}$. Therefore $\#\left\{ I \in \calX \mid \rma(R/a) > a\right\} \le \left(\frac{c}{2}\right)^2$. By Proposition \ref{key}, we conclude that 
$$
\# \calX  \le \frac{c}{2} + \left(\frac{c}{2}\right)^2
$$ 
as desired. 
\fi

%%%%%%%%%%%%%%%%%%%%%%%%%%%%%%%%%%%%%%%%%%%%%%%%%%%%%%%%%%%%%%%%%%%%%%%%%%%%%%%%%%%%%%%%

\section{Corollaries and examples}

%\textcolor{red}{
%We give applications of our argument and obtain new characterizations of Arf rings. }
We summarize some consequences of Theorem \ref{main}. In this section we maintain the notation as in Setup 3.1. Let $\calX_R$ be the set of graded ideals $I$ of $R$ such that $R/I$ is Gorenstein and $\mu_R(I) \ge 2$. 
%Note that $\calX \ne \emptyset$ if and only if $R \ne \overline{R}$. 
Recall that  $a = \rma(R)$ and $c = \rmc(H)$.

The direct consequence of the proof of Theorem \ref{main} is stated as follows, which is useful to compute concrete examples.

\begin{cor}\label{4.1}
Suppose that $R=k[H]$ is a Gorenstein ring. For each $I \in \calX_R$, we set 
$J=t^{a-\rma(R/I)}I$. Then the following assertions hold true.
\begin{enumerate}[$(1)$]
\item $J \in \calX_R$ and $\rma(R/J) = 2a - \rma(R/I)$. Hence, if $\rma(R/I) < a$ $($resp. $\rma(R/I) > a$$)$, then $\rma(R/J) > a$ $($resp. $\rma(R/J) < a$$)$.
\item $\rma(R/I) \in H$, $a \ne \rma(R/I)$, and $a-\rma(R/I) \in \Bbb Z \setminus H$. 
\item If $\rma(R/I) <a $, then $a-\rma(R/I) \in \Bbb N \setminus H$.
\item If $\rma(R/I) >a $, then $\rma(R/I)-a \in \Bbb N \setminus H$. 
\end{enumerate}
\end{cor}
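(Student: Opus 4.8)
The plan is to read the entire statement off the proof of Theorem \ref{main}, combined with the standard symmetry property of numerical semigroups recorded in Setup 3.1. The key observation to make first is that the ideal $J = t^{a-\rma(R/I)}I$ of the corollary coincides with the ideal produced in the course of proving Theorem \ref{main}. There, for $I \in \calX_R$ with $b := \rma(R/I)$, one constructs some $J' \in \calX_R$ with $\rma(R/J') = 2a - b$ and $I = t^{b-a}J'$, whence $J' = t^{a-b}I = J$. Thus assertion $(1)$, including the dichotomy on the sign of $\rma(R/J) - a$, is nothing more than a restatement of what was already established, and I would simply invoke that argument: if $\rma(R/I) < a$ then $2a - \rma(R/I) > a$, and symmetrically.

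For assertion $(2)$, I would recall from Setup 3.1 that $a = \rma(R) = \rmf(H) = \max(\Bbb Z \setminus H)$, and that $\rma(R/I) \in H$ with $\rma(R/I) \ne a$ for every non-zero ideal. The only new input is $a - \rma(R/I) \notin H$, for which I invoke the characterization of symmetric semigroups: since $R=k[H]$ is Gorenstein, $H$ is symmetric, equivalently $z \in H \iff \rmf(H) - z \notin H$ for every $z \in \Bbb Z$ (a standard fact; see \cite{RG}). Applying this with $z = \rma(R/I) \in H$ yields $a - \rma(R/I) = \rmf(H) - \rma(R/I) \notin H$, so $a - \rma(R/I) \in \Bbb Z \setminus H$.

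Assertion $(3)$ is then immediate: when $\rma(R/I) < a$ the integer $a - \rma(R/I)$ is positive, hence lies in $\Bbb N$, and by $(2)$ it is not in $H$. For assertion $(4)$, where $\rma(R/I) > a$, I would pass to $J$. By $(1)$, $J \in \calX_R$ is a non-zero ideal with $\rma(R/J) = 2a - b$, so $\rma(R/J) \in H$ by Setup 3.1. Applying symmetry again gives $\rmf(H) - \rma(R/J) = a - (2a-b) = b - a = \rma(R/I) - a \notin H$, and since $b > a$ this integer is positive, so $\rma(R/I) - a \in \Bbb N \setminus H$. Equivalently, one applies $(3)$ to $J$, whose $\rma$-invariant is strictly less than $a$.

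There is no genuine obstacle here; the proposition is essentially a bookkeeping corollary of Theorem \ref{main}. The only points requiring care are matching the normalization of $J$ (the sign of the exponent $a - \rma(R/I)$) with the ideal appearing in the theorem's proof, and noticing that assertion $(4)$ cannot be extracted from symmetry applied to $\rma(R/I)$ directly --- because $\rmf(H) - \rma(R/I) = a - b$ is negative when $b > a$ --- so one must route through $J$ (or through the value $2a - b \in H$) to land on the positive integer $\rma(R/I) - a$.
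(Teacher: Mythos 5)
Your proposal is correct and follows essentially the same route as the paper: assertion $(1)$ is read off from the proof of Theorem \ref{main}, assertion $(2)$ follows from $\rma(R/I)\in H$, $a\notin H$, and the symmetry characterization $z\in H \iff \rmf(H)-z\notin H$, assertion $(3)$ is immediate, and assertion $(4)$ is obtained by routing through $J$ exactly as the paper does. Your closing remark about why $(4)$ cannot be deduced by applying symmetry directly to $\rma(R/I)$ is a correct reading of the same subtlety the paper's proof navigates.
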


\begin{proof}
We already proved the assertion $(1)$ in the proof of Theorem \ref{main}. Recall that $\rma(R/I) \in H$ and $a \not\in H$. So $a \ne \rma(R/I)$. 
As $H$ is symmetric and $\rma(R/I) \in H$, we see that $a-\rma(R/I) \in \Bbb Z \setminus H$. In particular, if $\rma(R/I) <a$, then $a-\rma(R/I) \in \Bbb N \setminus H$. On the other hand, we assume $\rma(R/I) >a$. Since $J \in \calX_R$ and $\rma(R/J) < a$, we conclude that $\rma(R/I) - a = a - \rma(R/J) \in \bbN \setminus H$, as claimed. 
\end{proof}

The next provides a complete list of graded ideals in $\calX_R$. 

\begin{cor}\label{4.2}
Suppose that $R=k[H]$ is a Gorenstein ring. Then the equality 
$$
\calX_R = \{R:_Rt^m, \, t^m(R:_Rt^m) \mid m \in \bbN \setminus H\}
$$ 
holds. Moreover, for each $m \in \bbN \setminus H$, one has 
\begin{center}
$\rma(R/R:_Rt^m) = a-m$ \ and \ $\rma(R/t^m(R:_Rt^m)) = a+m$.
\end{center}
\end{cor}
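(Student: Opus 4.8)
The plan is to split $\calX_R$ according to whether $\rma(R/I)$ lies below or above $a$, to identify the lower half with the explicit family $\{R:_R t^m\}$ by means of Proposition \ref{key}, and then to recover the upper half by transporting the lower half through the degree-shift involution already extracted in the proof of Theorem \ref{main}. Since $\rma(R/I) \in H$ for every nonzero ideal while $a = \rmf(H) \notin H$, each $I \in \calX_R$ satisfies $\rma(R/I) \ne a$; hence $\calX_R$ is the disjoint union of $\calX_R^{<} = \{I \in \calX_R \mid \rma(R/I) < a\}$ and $\calX_R^{>} = \{I \in \calX_R \mid \rma(R/I) > a\}$, and it suffices to describe each piece.

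First I would match the abstract bijection $\Psi$ of Proposition \ref{key} with the concrete formula. For $m \in \bbN \setminus H$ one checks directly that $R:(1, t^m) = \{x \in \rmQ(R) \mid x \in R \text{ and } x t^m \in R\} = R:_R t^m$, so that $\Psi(m) = R:_R t^m$. Because the proof of Proposition \ref{key} shows that $R/I$ is automatically Gorenstein once $\rma(R/I) < a$, the set $\calX_R^{<}$ coincides with the set $\calY_R$ treated there; combined with the bijectivity of $\Psi$ this gives $\calX_R^{<} = \{R:_R t^m \mid m \in \bbN \setminus H\}$.

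Next I would compute the $\rma$-invariant combinatorially. As $R/(R:_R t^m)$ is Artinian, $\rma(R/(R:_R t^m))$ is its top nonzero degree, and $[R/(R:_R t^m)]_n \ne (0)$ precisely when $n \in H$ and $n + m \notin H$. Using the symmetry of $H$ (i.e. $z \in H \Leftrightarrow a - z \notin H$) one finds $a - m \in H$ with $(a-m) + m = a \notin H$, while any $n \in H$ with $n + m \notin H$ forces $n + m \le \rmf(H) = a$; hence the maximum is $a - m$ and $\rma(R/(R:_R t^m)) = a - m$. Feeding this into Corollary \ref{4.1}(1) with $I = R:_R t^m$ produces $t^{a-(a-m)} I = t^m(R:_R t^m) \in \calX_R$ together with $\rma(R/t^m(R:_R t^m)) = 2a - (a-m) = a+m$, which establishes both displayed formulas. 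The same corollary shows that $I \mapsto t^{a - \rma(R/I)}I$ is an involution of $\calX_R$ (applying it twice restores $I$) that interchanges $\calX_R^{<}$ and $\calX_R^{>}$; therefore $\calX_R^{>} = \{t^m(R:_R t^m) \mid m \in \bbN \setminus H\}$, and taking the union of the two halves yields the asserted description of $\calX_R$.

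The step I expect to require the most care is the combinatorial identification of the top nonzero degree of $R/(R:_R t^m)$: one must use that the $\rma$-invariant of an Artinian graded ring is its top degree and then invoke the symmetry of $H$ in exactly the right direction to pin the maximum to $a - m$. The remaining points — verifying $R:(1,t^m) = R:_R t^m$ and keeping track of the single shift by $t^m$ when passing from $R:_R t^m$ to $t^m(R:_R t^m)$ — are routine once Proposition \ref{key} and Corollary \ref{4.1} are in hand.
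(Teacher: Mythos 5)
Your proposal is correct and follows essentially the same route as the paper: identify the ideals with $\rma(R/I)<a$ via the bijection of Proposition \ref{key} (noting $R:(1,t^m)=R:_Rt^m$), and transport them to the ideals with $\rma(R/I)>a$ by the shift $I\mapsto t^{a-\rma(R/I)}I$ from Corollary \ref{4.1}. The only difference is cosmetic: you spell out the computation $\rma(R/(R:_Rt^m))=a-m$ via the symmetry of $H$, which the paper leaves as ``straightforward to check.''
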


\begin{proof}
Note that $R:(1, t^m) = R:_Rt^m$ for all $m \in \bbN \setminus H$. 
By Proposition \ref{key}, there is a one-to-one correspondence below:
\begin{center}
$\bbN \setminus H \ \longleftrightarrow \ \{I \in \calX_R \mid \rma(R/I) < a\}$, \ $m \longmapsto R:(1, t^m)$.
\end{center}
This shows the equality $\{I \in \calX_R \mid \rma(R/I) < a\} = \{R:_Rt^m \mid m \in \bbN \setminus H\}$. Besides, the proof of Theorem \ref{main} guarantees that the map 
\begin{center}
$\{I \in \calX_R \mid \rma(R/I) < a\} \ \longleftrightarrow \ \{I \in \calX_R \mid \rma(R/I) > a\}$, \ $I \longmapsto t^{a-\rma(R/I)}I$
\end{center}
is bijective. Hence $\{I \in \calX_R \mid \rma(R/I) > a\} = \{t^{a-\rma(R/{R:_Rt^m})}(R:_Rt^m) \mid m \in \bbN \setminus H\}$ holds. Since $H$ is symmetric and $c=a+1$, it is straightforward to check that $\rma(R/R:_Rt^m) = a - m$ for all $m \in \bbN \setminus H$. Therefore the equality
$$
\calX_R = \{R:_Rt^m, \, t^m(R:_Rt^m) \mid m \in \bbN \setminus H\}
$$ 
holds. Furthermore, by Corollary \ref{4.1} (1), we have the equalities 
$$
\rma(R/t^m(R:_Rt^m)) = 2a - \rma(R/R:_Rt^m) = 2a - (a-m) = a + m
$$
which complete the proof. 
\end{proof}

The ideals of the forms $R:_Rt^m$ and $t^m(R:_Rt^m)$ are easy to compute,  especially in numerical semigroup rings, and provide numerous examples illustrating Theorem \ref{main}.

\begin{ex}
Let $k[t]$ be the polynomial ring over a field $k$ and $R = k[H]$ the semigroup ring of a numerical semigroup $H$. Then the following assertions hold. 
\begin{enumerate}[$(1)$]
\item Let $H = \left<2, 2\ell + 1\right>~(\ell \ge 1)$. Then $\rmc(H) = 2\ell$ and the equality 
$$
\calX_R = \{(t^2, t^{2\ell + 1}), (t^4, t^{2\ell + 1}), \ldots, (t^{2\ell}, t^{2\ell + 1}), (t^{2\ell +1}, t^{4 \ell}), (t^{2\ell +1}, t^{4 \ell-2}), \ldots, (t^{2\ell +1}, t^{2 \ell + 2})\}
$$ holds. 
\item Let $H = \left<3, 4\right>$. Then $\rmc(H) = 6$ and the equality 
$$
\calX_R = \{(t^3, t^4), (t^4, t^6), (t^3, t^8), (t^8, t^9), (t^6, t^8), (t^4, t^9)\}
$$ holds. 
\item Let $H = \left<3, 5\right>$. Then $\rmc(H) = 8$ and the equality 
$$
\calX_R = \{(t^3, t^5), (t^5, t^6), (t^3, t^{10}), (t^5, t^9), (t^{10}, t^{12}), (t^9, t^{10}), (t^5, t^{12}), (t^6, t^{10})\}
$$ holds. 
\item Let $H = \left<n, n+1, \ldots, 2n-2\right>~(n \ge 4)$. Then $\rmc(H) = 2n$ and the equality 
\begin{eqnarray*}
\calX_R \!\! &=& \!\! \{(t^n, t^{n+1}, \ldots, t^{2n-2}), (t^{n+1}, t^{n+2}, \ldots, t^{2n-2}, t^{2n})\} \\
&\bigcup& \!\!  \{(t^n, t^{n+1}, \ldots, t^{n+i-1}, t^{n+i+1}, \ldots, t^{2n-2}) \mid 1 \le i \le n-2\} \\
&\bigcup& \!\! \{(t^{3n-1}, t^{3n}, \ldots, t^{4n-3}), (t^{2n}, t^{2n+1}, \ldots, t^{3n-3}, t^{3n-1})\} \\
&\bigcup& \!\! \{(t^{2n-i-1}, t^{2n-i}, \ldots, t^{2n-2}, t^{2n}, \ldots, t^{3n-i-3}) \mid 1 \le i \le n-2\} 
\end{eqnarray*}
holds. 
\end{enumerate}
\end{ex}

Let $H_1 = \left<a_1, a_2, \ldots, a_{\ell}\right>$ and $H_2 = \left<b_1, b_2, \ldots, b_{m}\right>~(\ell, m \ge 1)$ be numerical semigroups. We choose $d_1 \in H_2 \setminus \{b_1, b_2, \ldots, b_{m}\}$ and $d_2 \in H_1 \setminus \{a_1, a_2, \ldots, a_{\ell}\}$ such that $\gcd(d_1, d_2) = 1$. 
We say that 
$$
H=\left<d_1H_1, d_2 H_2\right> = \left<d_1a_1, d_1a_2, \ldots, d_1a_{\ell}, d_2b_1, d_2b_2, \ldots, d_2b_m\right>
$$
is a {\it gluing} of $H_1$ and $H_2$ with respect to $d_1 \in H_2$ and $d_2 \in H_1$.  
%Remember that a three-generated numerical semigroup $H$ is symmetric if and only if $k[H]$ is a complete intersection (\cite[Theorem 3.10]{Herzog}, \cite[Corollary 10.5]{RG}). 

Note that every three-generated symmetric numerical semigroup $H$ is obtained by gluing of a two-generated numerical semigroup $H_1$ and $\Bbb N$ (\cite[Section 3]{Herzog}, \cite[Proposition 3]{W}). 
Let $a, b \in \Bbb Z$ be positive integers with $\gcd(a, b) = 1$. We set $H_1 = \left<a, b\right>$ and assume that $H_1$ is minimally generated by two-elements. Choose $c \in H_1$ and $d \in \Bbb  N$ so that $c, d$ satisfy  the conditions that $c > 0$, $d > 1$, $c \not\in \{a, b\}$, and $\gcd(c, d) = 1$.  Hence,  $\gcd(da, db, c) = 1$. 
We consider a gluing 
$
H = \left<dH_1, c\bbN \right>
$
of $H_1$ and $\bbN$ with respect to $d \in \bbN$ and $c \in H_1$. 
Let $k$ be a field. 
We then have the isomorphism
$$
k[H] \cong k[X, Y, Z]/(X^{b} - Y^{a}, Z^{d} - X^{m}Y^n)
$$
of $k$-algebras, where $c = am + bn$ with $m, n \in \bbN$. 
Hence, $\rma(k[H]) = d(ab-a-b) + (d-1)c$.

\begin{cor}
Let $H$ be a three-generated symmetric numerical semigroup. Under the same notation of above, the equality
$$\# \calX_{k[H]} = d(ab-a-b) + (d-1)c + 1$$ holds. 
\end{cor}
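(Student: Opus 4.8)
The plan is to read the corollary off directly from Theorem \ref{main}, the only extra ingredient being the value of the conductor $\rmc(H)$, which is already encoded in the $\rma$-invariant formula recorded just before the statement. Since everything hinges on Theorem \ref{main}, the argument is short.

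First I would note that the hypothesis that $H$ is symmetric guarantees that $R = k[H]$ is a Gorenstein ring, so that Theorem \ref{main} applies and gives $\#\calX_{k[H]} = \rmc(H)$. Symmetry is automatic in the gluing construction at hand, since a gluing of the two symmetric semigroups $H_1 = \left<a,b\right>$ and $\bbN$ is again symmetric; but as it is part of the hypotheses, I would simply invoke it.

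Next I would translate the conductor into the $\rma$-invariant via the relation $\rmc(H) = \rmf(H) + 1 = \rma(R) + 1$ recorded in Setup 3.1. Combining this with the displayed value $\rma(k[H]) = d(ab-a-b) + (d-1)c$ immediately yields
$$
\#\calX_{k[H]} = \rmc(H) = \rma(k[H]) + 1 = d(ab-a-b) + (d-1)c + 1,
$$
which is the assertion.

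For completeness I would recall why the $\rma$-invariant has this value: the presentation $k[H] \cong k[X,Y,Z]/(X^b - Y^a,\, Z^d - X^m Y^n)$ exhibits $R$ as a graded complete intersection with $\deg X = da$, $\deg Y = db$, $\deg Z = c$, and with the two defining relations of degrees $dab$ and $dc$. The standard formula expressing the $\rma$-invariant of a graded complete intersection as the sum of the degrees of its defining equations minus the sum of the degrees of the variables then gives $dab + dc - (da + db + c) = d(ab-a-b) + (d-1)c$. There is essentially no obstacle to overcome here: the whole corollary is a bookkeeping exercise translating $\#\calX_{k[H]}$ through the chain conductor $\rightarrow$ Frobenius number $\rightarrow$ $\rma$-invariant $\rightarrow$ complete intersection degrees, with Theorem \ref{main} doing all the real work.
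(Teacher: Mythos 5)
Your proof is correct and is exactly the intended argument: the paper states this corollary without proof as an immediate consequence of Theorem \ref{main} combined with the identity $\rmc(H) = \rma(R) + 1$ from Setup 3.1 and the displayed value of $\rma(k[H])$. Your optional verification of the $\rma$-invariant via the graded complete intersection presentation (sum of degrees of the relations minus sum of degrees of the variables) is also accurate and consistent with the paper's stated formula.
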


\begin{ex}
Let $k$ be a field and $H = \left< 4, 6, 7\right>$. Then $H = \left<2 \left<2, 3\right>, 7 \bbN \right>$ and 
$$
R=k[H] \cong k[X, Y, Z]/(X^3 - Y^2, Z^2 - X^2Y).
$$
In particular, $\rma(R) = 9$ and $\#\calX_R = \rmc(H) =10$. Indeed, we have the equality
\begin{eqnarray*}
\calX_R \!\! &=& \!\! \{ (t^4, t^6, t^7), (t^6, t^7, t^8), (t^4, t^7), (t^4, t^6), (t^6, t^7)\} \\
\!\! &\bigcup& \!\! \{(t^{13}, t^{15}, t^{16}), (t^{11}, t^{12}, t^{13}), (t^7, t^{10}), (t^6, t^8), (t^7, t^8)\}.
\end{eqnarray*}
\end{ex}

For an $R$-module $M$, we denote by $\left[M\right]$ the isomorphism class of $M$.

\begin{cor}
Suppose that $R=k[H]$ is a Gorenstein ring. Then the equalities
\begin{center}
$\left\{[\, I\, ] \mid I \in \calX_R \right\} = \{[R:_Rt^m] \mid m \in \bbN \setminus H\}$ \ and \ $\#\left\{[\, I\, ] \mid I \in \calX_R \right\} = \dfrac{c}{2}$ 
\end{center}
hold.
\end{cor}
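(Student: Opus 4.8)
The plan is to read off the isomorphism classes directly from the explicit description of $\calX_R$ in Corollary \ref{4.2}, and then to prove that the ``canonical representatives'' $R:_Rt^m$ are pairwise non-isomorphic. First I would record that, since $R$ is a domain, multiplication by $t^m$ is an isomorphism of $R$-modules $R:_Rt^m \overset{\cong}{\to} t^m(R:_Rt^m)$, so that $[t^m(R:_Rt^m)] = [R:_Rt^m]$ for every $m \in \bbN\setminus H$. Combined with the equality $\calX_R = \{R:_Rt^m,\ t^m(R:_Rt^m) \mid m \in \bbN\setminus H\}$ of Corollary \ref{4.2}, this immediately yields the first asserted equality $\{[I]\mid I\in\calX_R\} = \{[R:_Rt^m]\mid m\in\bbN\setminus H\}$ and shows $\#\{[I]\mid I\in\calX_R\}\le \frac{c}{2}$.

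The substance of the argument is therefore to show that $m\mapsto[R:_Rt^m]$ is injective on $\bbN\setminus H$, and this is the step I expect to be the main obstacle. Assuming $R:_Rt^m\cong R:_Rt^{m'}$ as $R$-modules, I would first use that both are nonzero ideals of the domain $R$, hence rank-one torsion-free: extending the isomorphism over $\rmQ(R)$ (or chasing a single nonzero element) produces $x\in\rmQ(R)$, $x\ne 0$, with $R:_Rt^{m'}=x(R:_Rt^m)$. The delicate point is to force $x$ to be a monomial. Here I would exploit that both ideals are graded and contained in $W^{-1}R=k[t,t^{-1}]$: evaluating $x$ on a nonzero homogeneous element of $R:_Rt^m$, and $x^{-1}$ on one of $R:_Rt^{m'}$, shows $x,x^{-1}\in k[t,t^{-1}]$, so $x$ is a unit of the Laurent polynomial ring and hence $x=\lambda t^{s}$ for some $\lambda\in k$, $\lambda\ne 0$, and $s\in\bbZ$.

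Once $x=\lambda t^s$ is in hand, the two ideals differ only by the shift $s$, i.e. the value sets satisfy $V_{m'}=V_m+s$, where $V_m=\{d\in\bbZ\mid t^d\in R:_Rt^m\}=H\cap(H-m)$. The hard part being over, I would finish by computing the translation-covariant invariant $\max(\bbZ\setminus V_m)$: since $\bbZ\setminus V_m=(\bbZ\setminus H)\cup(\bbZ\setminus(H-m))$ and $m>0$, this maximum equals $\rmf(H)=a$ independently of $m$. Comparing $\max(\bbZ\setminus V_{m'})=a$ with $\max(\bbZ\setminus(V_m+s))=a+s$ forces $s=0$, whence $V_m=V_{m'}$ and $R:_Rt^m=R:_Rt^{m'}$; since $\rma(R/(R:_Rt^m))=a-m$ by Corollary \ref{4.2}, this gives $m=m'$. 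Injectivity, together with the symmetry of $H$ (which gives $\#(\bbN\setminus H)=\frac{c}{2}$, as used in Proposition \ref{key}), then yields $\#\{[I]\mid I\in\calX_R\}=\frac{c}{2}$, completing the proof.
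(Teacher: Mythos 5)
Your argument is correct, and its first half (the homothety $R:_Rt^m \overset{\cong}{\to} t^m(R:_Rt^m)$, hence the first displayed equality and the bound $\#\{[\,I\,]\mid I\in\calX_R\}\le \frac{c}{2}$) coincides with the paper's. Where you genuinely diverge is in the injectivity of $m\mapsto [R:_Rt^m]$. The paper starts from $R:_Rt^m=t^{\ell}(R:_Rt^{m'})$ (tacitly using that an isomorphism of these graded fractional ideals must be a monomial homothety), applies $R:(-)$ and the Gorenstein biduality $R:(R:J)=J$ to recover $(1,t^m)=(t^{-\ell},t^{-\ell+m'})$ inside $k[t,t^{-1}]$, and compares least degrees to force $\ell=0$ and then $m=m'$. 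You instead (i) justify the monomial-homothety step explicitly --- a point the paper leaves implicit --- by noting that the multiplier $x$ satisfies $x,x^{-1}\in k[t,t^{-1}]$, hence $x=\lambda t^s$; and (ii) replace the colon-duality computation by the value-set computation $V_m=H\cap(H-m)$ together with the translation-covariant invariant $\max(\bbZ\setminus V_m)=a$ (valid since $m\ge 1$), which forces $s=0$, after which $\rma(R/(R:_Rt^m))=a-m$ from Corollary \ref{4.2} gives $m=m'$. Both routes are sound; yours is more self-contained and elementary in that it never invokes reflexivity of fractional ideals over the Gorenstein ring $R$, while the paper's is shorter once one grants $R:(R:J)=J$ and the unstated homothety fact.
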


\begin{proof} 
Note that $R:_Rt^m \cong t^m(R:_Rt^m)$ as an $R$-module for each $m \in \bbN \setminus H$. This shows the equality $\left\{[\, I\, ] \mid I \in \calX_R \right\} = \{[R:_Rt^m] \mid m \in \bbN \setminus H\}$ and its cardinality is at most a half of $c$. We now assume $R:_Rt^m = t^{\ell}(R:_Rt^{m'})$ for some $m, m' \in \bbN \setminus H$ and $\ell \in \Bbb Z$. Then
$$
R:(1, t^m) = t^{\ell}(R:(1, t^{m'})) = R:(t^{-\ell} (1, t^{m'})) = R:  (t^{-\ell}, t^{-\ell + m'}).
$$
As $R$ is Gorenstein, we have $(1, t^m) = (t^{-\ell}, t^{-\ell + m'})$ in $k[t, t^{-1}]$. Since $-\ell < -\ell + m'$, we have $\ell = 0$ and $m = -\ell + m'=m'$. Hence the cardinality of $\left\{[\, I\, ] \mid I \in \calX_R \right\}$ is the half of $c$. 
\end{proof}

\begin{rem}
There exists a one-dimensional local Gorenstein numerical semigroup ring $A$ with infinite residue class field (e.g., ${\Bbb Q}[[t^3, t^7]]$, ${\Bbb C}[[t^4, t^5, t^6]]$) admitting infinitely many two-generated Ulrich ideals. Hence $\calX_A = \infty$. 
%$R$がlocal ringの場合は, 2元生成Ulrich idealsが無限個存在する環があるので, $R/I$がGorensteinとなるようなイデアル$I$の個数が有限個ということは期待できない, というか, それはあり得ない。例えば, $k[[t^3, t^7]]$などがその例になっている。
\end{rem}

When $A$ is a local ring, although the set $\calX_A$ is not necessarily finite, there is an associated graded ring $G$ with respect to a filtration of ideals such that $\calX_G$ is a finite set.

%つまり, $A$がlocal ringのときは, $\calX_A$が有限とは言えないけれど, （あるfiltrationに関する）随伴次数環$G$ではそのようなイデアルは有限個になる。

Let $(A, \m)$ be a Noetherian local ring with $\dim A=1$ and $V=\overline{A}$ the integral closure of $A$ in its total ring $\rmQ(A)$ of fractions. Assume that $V$ is a DVR which is a module-finite extension of $A$ and $A/\m \cong V/\n$, where $\n = tV~(t\in V)$ denotes the maximal ideal of $V$. 
Let $\rmo(-)$ denote the $\n$-adic valuation (or the order function) of $V$ and set
$$
v(A)= \{\rmo(f) \mid 0 \ne f \in A\}.
$$
Then, $H_A=v(A)$ is called the {\it value semigroup} of $A$, which is indeed a numerical semigroup. Let $\fkc = A:V$ denote the conductor of $A$. Then $\fkc = t^{\rmc(H_A)}V$ and $\rmc(H_A) = \ell_A(V/\fkc)$. Note that $A$ is Gorenstein if and only if $H_A = v(A)$ is symmetric (\cite[Theorem]{Kunz}).  

For each $\ell \in \Bbb Z$, we set $F_{\ell} = \n^{\ell} \cap A$. Then $\calF = \{F_{\ell}\}_{\ell \in \Bbb Z}$ is a filtration of ideals in $A$. We define
$$
G=G(\calF) = \bigoplus_{\ell \ge 0} F_{\ell}/F_{\ell+1} = \bigoplus_{\ell \ge 0} (\n^{\ell} \cap A)/(\n^{\ell+1} \cap A)
$$
and call it the {\it associated graded ring of $A$ with respect to $\calF$}. Note that, for each $\ell \ge 0$, $G_{\ell} \ne (0)$ if and only if $\ell \in H_A$. This shows $H_A =\{\ell \ge 0 \mid G_{\ell} \ne (0)\}$ and the isomorphism below:
$$
G = \bigoplus_{\ell \ge 0} F_{\ell}/F_{\ell+1} \cong (A/\m)[H_A]. 
$$
With this notation we have the following. 

\begin{cor}
Let $(A, \m)$ be a one-dimensional Gorenstein complete local domain with algebraically closed residue class field. Let $G=G(\calF)$ be the associated graded ring of $A$ with respect to the filtration $\calF=\{\n^{\ell} \cap A\}_{\ell \in \Bbb Z}$, where $\n$ denotes the maximal ideal of $V=\overline{A}$. 
Then the equality
$$
\#\left\{I \mid I ~\text{is a graded ideal of} ~G ~\text{such that}~G/I~\text{is Gorenstein and}~\mu_G(I) \ge 2\right\} = \rmc(H_A)
$$
holds, where $H_A = v(A)$ denotes the value semigroup of $A$. 
\end{cor}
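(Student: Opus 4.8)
The plan is to recognize that this corollary is a direct specialization of Theorem \ref{main}, so the whole task reduces to certifying that the associated graded ring $G$ satisfies the hypotheses of that theorem: namely, that it is a \emph{Gorenstein} numerical semigroup ring over a field.

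First I would record the structural identification already established in the discussion preceding the statement: as a graded ring one has
$$
G = G(\calF) = \bigoplus_{\ell \ge 0}(\n^{\ell}\cap A)/(\n^{\ell+1}\cap A) \cong (A/\m)[H_A],
$$
the numerical semigroup ring of the value semigroup $H_A = v(A)$ over the residue field $A/\m$. Here the hypothesis that $A/\m$ is algebraically closed plays a precise role: the inclusion $A/\m \hookrightarrow V/\n$ is a finite field extension, since $V$ is a module-finite extension of $A$, and algebraic closedness of $A/\m$ forces this extension to be an equality $A/\m \cong V/\n$. This is exactly the condition under which the isomorphism $G \cong (A/\m)[H_A]$ was derived, so completeness and algebraic closedness together legitimize the identification. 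In particular $G$ is literally of the form $k[H_A]$ with $k = A/\m$ a field, hence a one-dimensional Noetherian graded integral domain.

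Next I would verify the Gorenstein hypothesis. Because $A$ is Gorenstein, Kunz's theorem (\cite[Theorem]{Kunz}) guarantees that its value semigroup $H_A$ is symmetric; equivalently, the semigroup ring $(A/\m)[H_A]$ is Gorenstein. Thus $G \cong k[H_A]$ is a Gorenstein ring over the field $k = A/\m$, which is precisely the standing assumption of Theorem \ref{main}.

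Finally I would apply Theorem \ref{main} with $R = G$, $k = A/\m$, and $H = H_A$. Since the number of graded ideals $I$ of a Gorenstein numerical semigroup ring $k[H]$ for which $k[H]/I$ is Gorenstein and the number of generators is at least two equals the conductor $\rmc(H)$, we conclude that the displayed cardinality equals $\rmc(H_A)$, as claimed. I expect no genuine obstacle: there is no computation to carry out, and the only point requiring care is the bookkeeping that certifies $G$ is an honest semigroup ring over a field—rather than merely an abstract graded domain—so that Theorem \ref{main} applies verbatim. That point is handled by the residue-field identification forced by the algebraically closed hypothesis, together with the symmetry of $H_A$ coming from the Gorensteinness of $A$.
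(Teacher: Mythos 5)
Your proposal is correct and matches the paper's (implicit) argument exactly: the paper leaves the corollary without a written proof precisely because it follows from the identification $G \cong (A/\m)[H_A]$ established in the preceding discussion, the symmetry of $H_A$ via Kunz's theorem, and a direct application of Theorem \ref{main}. Your care in noting that completeness gives module-finiteness of $V$ over $A$ and that algebraic closedness forces $A/\m \cong V/\n$ is exactly the bookkeeping the paper's setup requires.
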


%\begin{ex}
%$A$の具体例で, value semigroupを具体的に。渡辺先生の本を参考に。
%\end{ex}

%\begin{cor}
%\textcolor{red}{
%Curveの話を書く。渡辺先生の本を参考に。}
%\end{cor}

%%%%%%%%%%%%%%%%%%%%%%%%%%%%%%%%%%%%%%%%%%%%%%%%%%%%%%%%%%%%%%%%%%%%%%%%%%%%%%%%%%%%%%%%

%We close this paper to pose the following question. 

%\begin{ques}
%Is there any upper bound of the set of monomial Ulrich ideals in case $R$ is not necessarily Gorenstein ring?
%\end{ques}
%%%%%%%%%%%%%%%%%%%%%%%%%%%%%%%%%%%%%%%%%%%%%%%%%%%%%%%%%%%%%%%%%%%%%%%%%%%%%%%%%%%%%%%%
 
\begin{ac}
The author would like to thank the referee for his/her valuable comments and suggestions.
%The authors thank the referee for his/her careful reading, comments, and helpful suggestions.
\end{ac}

\vspace{-1em}
%%%%%%%%%%%%%%%%%%%%%%%%%%%%%%%%%%%%%%%%%%%%%%%%%%%%%%%%%%%%%%

\end{document}